\documentclass[12pt,a4paper,leqno]{amsart}
\usepackage[utf8]{inputenc}
\usepackage{amssymb,color,tikz, cancel}
\usepackage{amsmath,amscd,amsfonts,amsthm,verbatim}
\usepackage{tikz-cd}
\usepackage{enumitem}
\usepackage[all]{xy}

\usepackage{hyperref}

\usepackage{array}
\usepackage{graphicx,color}

\usepackage{geometry}
\newtheorem{theorem}{Theorem}[section]
\newtheorem{proposition}[theorem]{Proposition}

\newtheorem{lemma}[theorem]{Lemma}
\newtheorem{corollary}[theorem]{Corollary}
\newtheorem{definition}[theorem]{Definition}

\newtheorem{remark}[theorem]{Remark}
\newtheorem{conjecture}[theorem]{Conjecture}

\newtheorem{question}[theorem]{Question}

\newcommand{\ZZ}{\mathbb{Z}}

\newcommand {\PP}{\mathbb{P}}

\newcommand{\kk}{\mathbb{K}}

\newcommand{\cE}{\mathcal{E}}
\newcommand{\cF}{\mathcal{F}}
\newcommand{\cG}{\mathcal{G}}

\newcommand{\cM}{\mathcal{M}}

\newcommand{\cO}{\mathcal{O}}

\newcommand{\cU}{\mathcal{U}}

\DeclareMathOperator{\HH}{H}

\DeclareMathOperator{\End}{End}
\DeclareMathOperator{\Ext}{Ext}

\DeclareMathOperator{\rk}{rk}
\DeclareMathOperator{\rank}{rank}
\DeclareMathOperator{\Spl}{Spl}

\begin{document}

\title[Stability of syzygy bundles of Ulrich bundles]{Stability of syzygy bundles of Ulrich bundles}
  \author[R.\ M.\ Mir\'o-Roig]{Rosa M.\ Mir\'o-Roig}
  \address{Facultat de
  Matem\`atiques i Inform\`atica, Universitat de Barcelona, Gran Via des les
  Corts Catalanes 585, 08007 Barcelona, Spain} \email{miro@ub.edu, ORCID 0000-0003-1375-6547}

\subjclass[2020]{14J60, 14D20, 14F06}

\keywords{stability, syzygy bundles, Ulrich bundles}

\thanks{The author has been partially supported by the grant PID2024-157142NB-I00.}

\begin{abstract}
Let $X$ be  either a smooth K3 surface or  a smooth Fano variety (i.e. $-K_X$ is ample) of dimension $n$ and   index $i_X\ge n-2$ and let $\cE$  be an initialized Ulrich bundle on $X$. In this paper, we show that  the syzygy bundle $S_{\cE}$, defined as the kernel of the evaluation map $eval:H^{0}(X,\cE)\otimes\cO_{X}\rightarrow \cE,$ is semistable.
\end{abstract}

\maketitle

\section{Introduction}
The goal of this short note is to study the stability of syzygy bundles associated to Ulrich bundles on smooth irreducible K3 surfaces and smooth irreducible Fano varieties.

Let $X$  be a smooth irreducible projective variety of  dimension $n\ge 1$ defined over an algebraically closed field $\kk$  of characteristic 0. Let $\cE$
 be a globally generated vector bundle on $X$. The {\em syzygy bundle}  of $\cE$  is defined as  the kernel $S_{\cE}$ of the  evaluation map $$eval: H^0(X,\cE)\otimes{\mathcal O}_X\to \cE,$$ which is surjective. Arising in a variety of geometric and algebraic problems, syzygy bundles  have been intensively studied from different points of view ranging from the syzygies of $X$ (and the $N_p$ properties in the sense of Green \cite{G}) to questions of tight closure.
 In particular, when $\cE=\cO _X(L)$ is a globally generated line bundle,  many efforts have been invested in knowing whether  the associated syzygy bundle $S_L:=S_{\cO_X(L)}$ which now fits inside the exact sequence:
 $$
 0\longrightarrow S_L\longrightarrow H^0(X,\cO _X(L))\otimes \cO_{X}\longrightarrow \cO_{X}(L)\longrightarrow 0,
 $$
 is stable with respect to a fixed  ample line bundle $H$ on $X$  (see, for instance, \cite{BP}, \cite{CL} and \cite{MR}). When $X$ is a smooth curve of genus $g\ge 1$, the situation is well understood thanks to work of Butler \cite{B94} and Ein-Lazarsfeld \cite{EL}. In particular, $S_L$ is stable provided $deg(L)\ge 2g+1$. Motivated by this problem, in
 \cite[Corollary 2.6 and Conjecture 4.14]{ELM} Ein-Lazarsfeld-Mustopa established the following Conjecture:

\begin{conjecture} \label{ConjELM}
Let $A$ and $P$ be two line bundles on a smooth projective variety $X$. Assume that $A$ is very ample and set $L_{d}:=dA+P$ for any positive integer $d$. Then, the syzygy bundle $S_{L_{d}}$ is $A-$stable for $d\gg0$.
\end{conjecture}

 Notice that the semistability of $S_L$ with respect to $L$ is equivalent to the semistability of the pull back $\varphi _L^{*}T_{\PP}$ of the tangent bundle of $\PP := \PP(\HH ^0(X,L)^{\vee })$, where $\varphi _{|L|}:X\longrightarrow \PP(\HH ^0(X,L)^{\vee })
$ is the morphism defined by the linear system  $|L|$. Conjecture \ref{ConjELM} has been  solved in \cite[Theorem 4.3 and Corollary 4.4]{R}.

For vector bundles $\cE$ of higher rank very little is known. In fact,  the stability of the syzygy bundle $S_{\cE}$ has been recently  studied for globally generated vector bundles $\cE$ of higher rank on smooth projective varieties of arbitrary dimension $n\ge 1$ and so far only general results for curves and surfaces are known. Indeed, in \cite{B94} Butler proves that the syzygy bundle $S_{\cE} $ associated to a semistable globally generated vector bundle $\cE$ on a smooth curve $C$ of genus $g\ge 1$ is semistable provided $\mu(\cE)\ge 2g$ and even stable unless $\mu(\cE)=2g$ and $C$ is either hyperelliptic or $\Omega _C\hookrightarrow \cE$. For the stability of a syzygy bundle $S_{\cE}$ associated to a stable vector bundle $\cE$ on a surface  the reader can look at \cite{BP} and \cite{MRay} where the following questions are investigated:

\begin{question}\label{keyquestion} \rm Let $X$ be a smooth projective variety of dimension $n\ge 1$, $H$ an ample line bundle on $X$ and $\cE$ a stable globally generated vector bundle on $X$.

(i)  Is  $S_{\cE }$ stable with respect to $H$?

(ii) Is there an integer $m\gg 0$  such that  $S_{\cE (m)}$ is stable with respect to $H$?
\end{question}

The goal of this paper is to address  the above question for Ulrich bundles $\cE$ on K3 surfaces and on Fano varieties $X$ of dimension $n\ge 1$ and index $i_X\ge n-2$.
 In section \ref{Section:Preliminaries}, we collect all the preliminary results needed to establish  our main result. We have subdivided this section into two subsections. First, in Subsection \ref{Subsection:stability}, we recall the basic notions of semistability that will be used throughout the paper. In Subsection \ref{Subsection:Ulrich},  in order to provide a more self-contained exposition, we review the essential definitions and results concerning Ulrich bundles. The core of this note is presented in Section \ref{Core}
where we deal with   Question \ref{keyquestion}  for
the syzygy bundles associated to Ulrich bundles on smooth irreducible K3 surfaces, as well as on any $n$-dimensional smooth irreducible
 Fano variety $X$ of index
 $i_{X} \ge n-2$.
Finally, in the last part of this section, we  prove (Theorem \ref{Theorem:moduli}) that, in this setting, semistable syzygy bundles associated to unobstructed Ulrich bundles are smooth points in their moduli space. Moreover,  we explicitly compute the dimension of the irreducible component containing these points.

\section{Basic results}\label{Section:Preliminaries}

Let $X$ be a smooth irreducible projective variety of dimension $n$, defined over an algebraically closed field $\kk$ of characteristic zero and let
 $\cE$ be a  globally generated vector bundle of rank $r$ on $X$. We define the {\em syzygy bundle} $S_{\cE}$ associated to $\cE$ as the kernel of the surjective evaluation map
 \[
 ev:H^0(X,\cE)\otimes {\cO}_X \longrightarrow \cE.
 \]
Therefore, $S_{\cE}$ is a vector bundle on $X$ fitting in the following short exact sequence
\begin{equation}\label{Eq:Syzygy bundle exact sequence}
0 \longrightarrow S_{\cE} \longrightarrow H^0(X,\cE)\otimes {\cO}_X \longrightarrow \cE\longrightarrow 0. \end{equation}
In particular, we have:

\vskip 2mm
\begin{itemize}
    \item $c_1(S_{\cE})=-c_1(\cE)$,
    \item $\rk(S_{\cE})=\dim H^0(X,\cE)-r$,
    \item $c_t(S_{\cE})\cdot c_t(\cE)=1,$
    \item $\mu (S_{\cE})=\frac{c_1(S_{\cE})\cdot H^{n-1}}{\rk(S_{\cE})}=\frac{-c_1(\cE)\cdot H^{n-1}}{h^0(\cE)- \rk(\cE)}$.
\end{itemize}

\vskip 4mm
Our goal is to study the semistability of the syzygy bundle $S_{\cE}$ when $\cE$ is an Ulrich bundle of rank $r$ on either a smooth K3 surface or a smooth   Fano variety $X$ of dimension $n$ and index $i_X\ge n-2$ (see Definitions \ref{defFano} and \ref{defK3}) and to obtain local information on the geometry of their corresponding moduli spaces.

\subsection{Stability of syzygy bundles}\label{Subsection:stability}

In this paper, we are interested in determining the semistability of syzygy bundles. Let us start recalling  the  definition of stability due to Mumford-Takemoto and the key results.

\begin{definition}\rm Let $X$ be a  smooth irreducible projective variety of dimension $n$ and $H$ a very ample line bundle on $X$. A rank $r$ vector bundle $\cE$ on $X$ is said to be stable (resp. semistable) with respect to $H$ if for any subsheaf $\cF\subset \cE$ with $0<\rk(\cF)<\rk(\cE)$, we have

$$\mu (\cF)=\frac{c_1(\cF)H^{n-1}}{\rk(\cF)}<\mu (\cE)=\frac{c_1(\cE)H^{n-1}}{\rk(\cE)} $$
$$\text{( resp. } \quad \mu(\cF)= \frac{c_1(\cF)H^{n-1}}{\rk(\cF)}\le \mu (\cE)=\frac{c_1(\cE)H^{n-1}}{\rk(\cE)}\text{ )}. $$
\end{definition}

 The stability of syzygy bundles associated to globally generated line bundles $\cO_X(L)$ on polarized varieties $(X,H)$ is quite well understood (see, for instance, \cite{CL, EL, ELM, Fle, MR}) and nowadays big attention is devoted to the stability of syzygy bundles $S_{\cE}$ associated to stable globally generated vector bundles $\cE$ of higher rank $r$ on varieties of dimension $n\ge 1$. As a major challenge in this subject, Misra and Ray have recently  formulated the following question:

 \begin{question}\label{mainquestion}
     Let $X$ be a smooth irreducible complex projective variety and fix a very ample line bundle $H$ on $X$. Let $\cE$ be an $H$-semistable globally generated vector bundle of rank $r$ on $X$. Is $S_{\cE}$ stable? Is there an integer $m_0$ such that  for  $m\ge m_0$ the syzygy bundle $S_{\cE(m)}$ is $H$-stable?
 \end{question}

The question is quite open and as we already pointed out in the introduction a full answer is only known for varieties of low dimension ($n\le 2$).
  See \cite{B94} for the case of curves; and \cite[Theorem 1.3]{MRay} and  \cite[Theorem 1.1]{BP} for the case of surfaces.


\subsection{Ulrich bundles}\label{Subsection:Ulrich}

In this subsection, we introduce the vector bundles that constitute the main objects of study in this paper—namely, Ulrich bundles—and we collect the properties of these bundles that will be required in the sequel.

\begin{definition} \rm
    A coherent sheaf $\cE$ on a projective variety $X$ with a fixed ample
line bundle $\cO _X (1)$ is called {\em arithmetically Cohen-Macaulay} (for short, aCM) if it is
locally Cohen-Macaulay and $H^i (X,\cE(t)) = 0$ for all $t\in \ZZ$ and $1\le i\le  dim(X) - 1.$
\end{definition}

\begin{definition} \rm
    A coherent sheaf $\cE$ on a projective variety $X$ with a fixed ample
line bundle $\cO _X (1)$ is said to be {\em initialized}  if $H^0(X,\cE(t)) = 0$ for all $t\in \ZZ_{<0}$ but
$H^0(X,\cE)\ne 0$
\end{definition}

\begin{definition} \rm Given a smooth irreducible projective variety $X\subset \PP^n$ embedded by the very ample line bundle $\cO _X(1)$ and a vector bundle  $\cE$ on $X$, we say that $\cE$ is an {\em Ulrich bundle} if it is  an initialized aCM bundle and $h^0(X,\cE) = deg(X)rank(\cE)$ or, equivalently
$H^i (X , \cE (-jH )) = 0$ for any $i\ge 0$ and  $1 \le j \le dim X$.
\end{definition}

This definition admits several variants. For instance, an initialized vector bundle $\cE$ on a smooth irreducible projective variety $X\subset \PP^n$ embedded by the very ample line bundle $\cO _X(1)$  is an Ulrich bundle if $H^i(X,\cE (-i))=0$ for $i>0$ and $H^i(X,\cE(-i-1))=0$ for $i<dim(X)$ or, equivalently, there is a finite linear projection $\pi : X \longrightarrow \PP^n$
such that $\pi _{*} \cE = \cO _{\PP^{dim(X)}}^k$
for some positive integer $k$ \cite[Proposition 2.1]{ES}.
See \cite{ES} and \cite{CMP} for other equivalent definitions.

Ulrich bundles have made a first appearance in commutative algebra, being associated to maximal Cohen-Macaulay graded modules with maximal number of generators,
\cite{Ul}.  Eisenbud and Schreyer transferred this concept in the geometric setup and discovered a remarkable
connection with Cayley–Chow forms \cite{ES} (see also \cite{Beau} and \cite{CMP}
for recent developments from the geometric perspective and further
applications). The most significant open problem in the Ulrich bundle theory is the
Eisenbud–Schreyer conjecture which predicts that any smooth projective variety  carries an Ulrich bundle. If true, this conjecture would
have dramatic consequences on the expected shapes of cohomology
tables. While this problem is widely open, it has
been resolved in a number of relevant cases (see \cite{CMP} for more information). This note will be integrally devoted to answer Question \ref{keyquestion} for the case of  syzygy bundles $S_{\cE}$ associated to  Ulrich bundles $\cE$ on  either a smooth K3 surface or a smooth Fano  varieties of dimension $n$ and index $i_X\ge n-2$ (Theorem \ref{main}).

In the following Proposition we gather some of the properties of an Ulrich bundle $\cE$ that
will be used throughout the paper:

\begin{proposition}\label{properties}
   Let
$X\subset  \PP^N$ be a smooth variety of dimension $n$ polarized by a very ample line bundle $\cO_X(1)$ and let $\cF$ be an
Ulrich bundle on $X$ with respect to $H\in |\cO_X(1)|$. Then:
\begin{itemize}
\item[(i)] $\cF$ is globally generated;
\item[(ii)] $\cF$  is $m$-regular for any $m\ge 0$;
 \item[(iii)] $\cF$ is  semistable with respect to $H$. Moreover, if $\cF$ is strictly semistable with respect to $H$, then there exists a short exact sequence of vector bundles:
$$0 \longrightarrow \cE \longrightarrow \cF \longrightarrow \cG \longrightarrow 0,$$
with $\cE$ and $\cG$ Ulrich bundles of lower rank.
\end{itemize}
\end{proposition}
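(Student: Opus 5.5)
The plan is to read off (i) and (ii) from Castelnuovo--Mumford regularity, and to prove (iii) by pushing everything forward along the finite linear projection $\pi\colon X\to\PP^n$ with $\pi_*\cF\cong\cO_{\PP^n}^k$ from \cite[Proposition 2.1]{ES}.

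For (ii), recall that $\cF$ is $m$-regular when $H^i(X,\cF(m-i))=0$ for all $i>0$. The Ulrich vanishing $H^i(X,\cF(-j))=0$ for all $i\ge 0$ and $1\le j\le n$ gives $H^i(X,\cF(-i))=0$ for $1\le i\le n$. For $i>n$ there is nothing to check, so $\cF$ is $0$-regular. Mumford's lemma then says that a $0$-regular sheaf is $m$-regular for every $m\ge 0$. The same lemma also says that a $0$-regular sheaf is globally generated, which is (i).

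For (iii), I would first show that $\cF$ is Gieseker semistable, and hence $\mu$-semistable. Since $\pi$ is finite and $\pi^*\cO_{\PP^n}(1)=\cO_X(1)$, for every coherent $\cG$ on $X$ we have $\chi(X,\cG(t))=\chi(\PP^n,(\pi_*\cG)(t))$. So Hilbert polynomials, and hence the rank (times $\deg X$) and the $H$-degree, are preserved by $\pi_*$, and $\pi_*$ is exact. Let $\cE\subset\cF$ be a saturated subsheaf. Then $\pi_*\cE\subset\cO_{\PP^n}^k$. Since the trivial bundle is Gieseker semistable with $p(\cO^k)=p(\cO)$, we get $p(\pi_*\cE)\le p(\pi_*\cF)$, and therefore $p(\cE)\le p(\cF)$. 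In particular $\mu(\cE)\le\mu(\cF)$, because the leading two coefficients of the reduced Hilbert polynomial encode the slope; note that $c_1(\pi_*\cE)\le 0$ exactly when $\mu(\cE)\le\mu(\cF)$.

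Now suppose $\cF$ is strictly semistable. Take a saturated $\cE\subset\cF$ with $0<\rk\cE<\rk\cF$ and $\mu(\cE)=\mu(\cF)$, and set $\cG=\cF/\cE$, which is torsion-free. Then $\pi_*\cG$ is a torsion-free quotient of $\cO^k$ with $c_1(\pi_*\cG)=0$. \textbf{The key step, which I expect to be the main obstacle, is to prove $\pi_*\cG\cong\cO^{k-s}$ and $\pi_*\cE\cong\cO^{s}$.}

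1. The sheaf $Q=\pi_*\cG$ is globally generated of rank $q$, so I choose $q$ general sections, giving a generically injective map $\cO^q\to Q$.
2. Its determinant is a nonzero section of $\det Q\cong\cO$, hence nowhere vanishing. So the map is an isomorphism off codimension $2$, and in fact everywhere where $Q$ is locally free.
3. Torsion-freeness together with the surjection from $\cO^k$ forces $Q\cong\cO^q$. One way is to compare $h^0$ and use that the composite $\cO^k\to Q\hookrightarrow Q^{\vee\vee}=\cO^q$ is a surjection of trivial bundles off codimension $2$, hence everywhere.
4. The kernel $\pi_*\cE$ is then a direct summand of $\cO^k$, hence trivial.

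Once this is in place, $\pi_*\cE$ and $\pi_*\cG$ are trivial, so $\cE$ and $\cG$ are Ulrich sheaves by the criterion of \cite[Proposition 2.1]{ES}. Ulrich sheaves are aCM, hence maximal Cohen--Macaulay, and on a smooth $X$ maximal Cohen--Macaulay sheaves are locally free. So the sequence $0\to\cE\to\cF\to\cG\to0$ is a sequence of Ulrich vector bundles of lower rank, as claimed.
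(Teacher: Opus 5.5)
Your proof is correct, and it proves considerably more than the paper does. For (i), the paper deduces global generation from the linear $\cO_{\PP^N}$-resolution of an Ulrich bundle, while you get it from $0$-regularity via Mumford's lemma. Your route is more economical: (ii) already gives $0$-regularity, and the linear resolution is itself a deeper fact. Part (ii) coincides with the paper's argument.

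For (iii), the paper gives no argument and simply cites \cite[Proposition 3.3.14 and Corollary 3.3.17]{CMP}. You instead prove it directly from the finite linear projection $\pi$ with $\pi_*\cF\cong\cO_{\PP^n}^k$ of \cite{ES}. You first transfer semistability from $\cO_{\PP^n}^k$, which in fact yields Gieseker semistability. You then reduce the strictly semistable case to the fact that a torsion-free quotient $Q$ of $\cO_{\PP^n}^k$ with $c_1(Q)=0$ is trivial. Your determinant and reflexive-hull argument for this fact works:
\begin{itemize}
\item $\cO^q\to Q^{\vee\vee}$ is an isomorphism of reflexive sheaves on the locus where $Q$ is locally free, whose complement has codimension at least $2$, hence it is an isomorphism everywhere.
\item The composite $\cO^k\to Q^{\vee\vee}\cong\cO^q$ is a constant matrix of generic rank $q$, hence surjective everywhere.
\item This forces $Q=Q^{\vee\vee}$ and makes the kernel trivial.
\end{itemize}
What your version buys is self-containedness; the paper's version buys brevity.

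One phrase should be corrected. $\pi_*$ preserves the Hilbert polynomial but not literally the $H$-degree: Riemann--Roch gives $c_1(\pi_*\cE)=c_1(\cE)\cdot H^{n-1}-\frac{\rk\cE}{2}(K_X\cdot H^{n-1}+(n+1)H^n)$. Because the correction is proportional to the rank, your equivalence $c_1(\pi_*\cE)\le 0\iff\mu(\cE)\le\mu(\cF)$ is nonetheless right.
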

\begin{proof}
    (i) It immediately follows from the fact that any Ulrich bundle  $\cE$ on $X$  admits a linear $\cO _{\PP^N}$-resolution of the form:
\begin{equation} \label{RresolutionUlrich}
0 \longrightarrow \cO _{\PP^N}(-N+d)^{a_{N-d}}\longrightarrow \dots \longrightarrow \cO _{\PP^N}(-1)^{a_{1}}\longrightarrow \cO _{\PP^N}^{a_{0}}\longrightarrow \cE \longrightarrow 0.
\end{equation}

    (ii) The hypothesis $H^i(X,\cE(-i))=0$ for all $i> 0$ implies that $\cE$ is 0-regular and, hence, $m$-regular for all $m\ge 0$.

    (iii)  See \cite[Proposition 3.3.14 and  Corollary 3.3.17]{CMP}.
\end{proof}


\section{Stability of syzygy bundles of Ulrich bundles on Fano varieties}\label{Core}
The aim of this Section is to answer Question \ref{keyquestion}  for smooth K3 surfaces as well as for smooth Fano varieties $X$ of dimension $n$ and index $i_X\ge n-2$ and prove the semistability of the syzygy bundles $S_{\cE}$  associated to any Ulrich bundle $\cE$ on either a smooth irreducible K3 surface or a smooth irreducible Fano variety $X$ of dimension $n$ and index $i_X\ge n-2$.

\begin{definition}\label{defFano} \rm
    A {\em Fano variety} will be a smooth irreducible projective variety $X$ such that the anticanonical line bundle
$K ^{-1}_X$ is ample.
\end{definition}
Simplest examples are obtained by taking smooth complete intersections $X$ in $\PP^N$ of type
$(d_1, d_2, \cdots, d_k)$ . By adjunction formula, such a complete intersection is a Fano variety
if and only if $\sum _{i=1}^k d_i\le N$. Rational homogeneous varieties $G/H$
($G$ semisimple, $H$ parabolic) are also examples of Fano varieties. Fano varieties have a very rich internal geometry, which makes their
study very rewarding. The interest
in Fano varieties increased since Mori's program predicts that every
uniruled variety is birational to a fiberspace whose general fiber is a Fano
variety (with terminal singularities).

A basic invariant of a Fano variety is its index: The {\em index } $i_X$ of a Fano variety $X$ is the largest integer $r$ such that $-K_X=rH$ for some ample divisor $H$. It is well known that the index $i_X$ of a Fano variety $X$ of dimension $n$ is at
most $n + 1$. Moreover, if $i_X = n + 1$, then $X =\PP^n$; if $i_X = n$, then $X$
is a quadric; and  smooth Fano varieties with $i_X\ge  n-2$
can be classified (see, for example, \cite{S}). In this note we will restrict our attention to Fano varieties $X$ of dimension $n$ and  index $i_X\ge n-2$.

\begin{definition} \label{defK3} \rm
 A  {\em K3 surface}  will be a smooth irreducible projective  variety of dimension 2 with trivial canonical line bundle $\omega _X\cong \cO _X$ and  irregularity $q:=H^1(X,\cO _X)=0$.
\end{definition}

Therefore, for any K3 surface we have: arithmetic and geometric genus $p_s=p_g=1$ and Euler characteristic $\chi (\cO_X)=2$.
Basic examples of K3 surfaces are quartic surfaces in $\PP^3$,
 complete intersections of type (2,3) in $\PP^4$,
 complete intersections of type (2,2,2) in $\PP^5$, and
 double covers of $\PP^2$ branched over a sextic.

\begin{remark} \rm
   Let $X$ be a smooth irreducible Fano variety of dimension $n$ and index $i_X>1$
and let $Y\in |\cO _X(1)|$ be a general hyperplane section.
    By adjunction formula we have  $$K_Y=(K_X+Y)_{|Y}=(-i_XH+H)_{|H}=(-i_X+1)H_Y.$$ Therefore, we get that the general hyperplane section of a smooth Fano variety $X$ of dimension $n$ and index $i_X>1$ is a smooth Fano variety $Y$ of dimension $n-1$ and  index $i_Y=i_X-1$. Analogously, the general  hyperplane section of a smooth Fano 3-fold of index 1 is a smooth K3 surface.
\end{remark}

The proof of our main result relies on the following 2 lemmas.

\begin{lemma}\label{keylemma}
     Let $X$ be a smooth irreducible projective variety   with a fixed ample
line bundle $\cO _X (1)$ and let $\cE$ be an  initialized Ulrich bundle of rank $r$ on $X$.
Then, the restriction  $\cE_{|Y}$ of $\cE$ to a general hyperplane section  $Y$ of $X$ is an initialized Ulrich bundle on $Y$.
\end{lemma}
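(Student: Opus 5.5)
We will verify the vanishing characterization of Ulrich bundles on $Y$ directly from the one on $X$, using the restriction sequence. Let $Y\in|\cO_X(1)|$ be a general hyperplane section. Then $Y$ is smooth and irreducible of dimension $n-1$ (Bertini; for $n\ge 2$), and $\cO_Y(1)$ is very ample. For every $t\in\ZZ$ we have the exact sequence
\[
0\longrightarrow \cE(t-1)\longrightarrow \cE(t)\longrightarrow \cE_{|Y}(t)\longrightarrow 0 .
\]
Since $\cE$ is Ulrich on $X$, we know $H^i(X,\cE(-j))=0$ for all $i\ge 0$ and $1\le j\le n$.

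\textbf{Step 1 (Ulrich vanishings on $Y$).} Fix $1\le j\le n-1$ and apply the sequence with $t=-j$. Then both $\cE(-j)$ and $\cE(-j-1)$ have vanishing cohomology in all degrees, because $1\le j<j+1\le n$. The long exact cohomology sequence therefore gives $H^i(Y,\cE_{|Y}(-j))=0$ for all $i\ge 0$ and $1\le j\le n-1=\dim Y$. By the equivalent definition recalled above, this means $\cE_{|Y}$ is Ulrich on $Y$, provided it is nonzero and initialized.

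\textbf{Step 2 (initialized and aCM, if one prefers the first form of the definition).}
\begin{itemize}
\item Nonvanishing of sections: take $t=0$. Since $H^0(\cE(-1))=H^1(\cE(-1))=0$, restriction gives an isomorphism $H^0(X,\cE)\cong H^0(Y,\cE_{|Y})$, and this space is nonzero.
\item Vanishing in negative degrees: for $t<0$, we have $H^0(Y,\cE_{|Y}(t))\hookrightarrow H^1(X,\cE(t-1))$ when $H^0(X,\cE(t))=0$. So one needs $H^1(X,\cE(s))=0$ for $s\le -2$. This holds when $n\ge 2$ by the aCM property of $\cE$, except in the case $n=2$, where $H^1$ is not covered by aCM.
\end{itemize}
It is cleaner to avoid this issue altogether: $\cE_{|Y}$ is Ulrich by Step 1, hence $0$-regular and globally generated with the linear resolution (Proposition \ref{properties}). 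For an Ulrich bundle, $h^0(\cE_{|Y}(-1))=0$ already from Step 1, and then $H^0(\cE_{|Y}(t))=0$ for all $t<0$, since multiplication by a section of $\cO_Y(1)$ injects $H^0(\cE_{|Y}(t))$ into $H^0(\cE_{|Y}(-1))$. Equivalently, the numerical form also checks out: $h^0(\cE_{|Y})=h^0(\cE)=\deg(X)\,r=\deg(Y)\,r$.

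\textbf{Main obstacle.} The only real subtlety is bookkeeping which definition of Ulrich is used, in particular checking the initialized/aCM conditions in the edge degrees. The range $1\le j\le n-1$ in Step 1 already sidesteps this, so the argument reduces to the long exact sequence together with Bertini for the smoothness of $Y$.
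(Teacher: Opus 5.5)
Your proof is correct. It uses the same restriction sequence $0\to\cE(t-1)\to\cE(t)\to\cE_{|Y}(t)\to 0$ as the paper, but it verifies a different form of the Ulrich condition.

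The paper checks the ``initialized $+$ $h^0=\deg\cdot\rank$'' form. It uses only the three vanishings $H^0(X,\cE(-1))=H^1(X,\cE(-1))=H^1(X,\cE(-2))=0$ to get $H^0(Y,\cE_{|Y}(-1))=0$ and $h^0(Y,\cE_{|Y})=\deg(X)\,r=\deg(Y)\,r$. It leaves two things implicit:
\begin{itemize}
\item that $\cE_{|Y}$ is aCM, which follows from the same sequence because $H^i(X,\cE(t))$ and $H^{i+1}(X,\cE(t-1))$ both vanish for $1\le i\le n-2$;
\item that $H^0(Y,\cE_{|Y}(t))=0$ for $t<-1$.
\end{itemize}
Your Step 1 instead transfers the full vanishing $H^i(X,\cE(-j))=0$ for $1\le j\le n$ to $H^i(Y,\cE_{|Y}(-j))=0$ for $1\le j\le n-1$. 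By the equivalent definition this is already the Ulrich condition on $Y$, so no separate aCM or initializedness check is needed. The paper's version is quicker, but it is only complete once those unstated checks are supplied.

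One inaccuracy in your Step 2: for $n=2$ the aCM condition does cover $H^1$, since it requires $H^i=0$ for $1\le i\le \dim X-1=1$. So there is no exceptional case there; only $n=1$ would be one, and it is irrelevant here. Your bypass is fine anyway: multiply by a power of a nonzero section of $\cO_Y(1)$ on the integral variety $Y$ (a single section of $\cO_Y(1)$ does not suffice when $t<-2$).
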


\begin{proof} Since $\cE$ is an Ulrich bundle on $X$, we have: $$H^0(X,\cE(-1))=H^1(X,\cE(-1))=H^1(X,\cE(-2))=0$$ and $h^0(X,\cE)=\deg(X)\cdot \rank(\cE).$
Therefore,  from the exact  cohomology   sequence associated to the short exact sequence
    $$
    0 \longrightarrow \cE (-1)\longrightarrow \cE  \longrightarrow \cE _{|Y } \longrightarrow 0 $$
we deduce $H^0(Y,\cE_{|Y}(-1))=0$ and $h^0(Y,\cE_{|Y})=\deg(X)\dot \rank(\cE)=\deg(Y)\cdot \rank(\cE_{|Y})$ and we conclude that $\cE_{|Y}$ is a rank $r$ Ulrich bundle on $Y$.

\end{proof}

\begin{remark} \rm  Let $X$ be a smooth irreducible projective variety  with a fixed ample
line bundle $\cO _X (1)$ and let $\cE$ be an initialized vector bundle on $X$.
   It is worthwhile to point out that, in general, the restriction of $\cE$ to a general hyperplane section $Y$ of $X$ is not initialized.
   Indeed, let $\cE$ be the rank 2 vector bundle on $\PP^3$  given by the short exact sequence:
   $$
   e: \quad \quad  0 \longrightarrow \cO _{\PP^3}\longrightarrow \cE  \longrightarrow I_Z(2) \longrightarrow 0$$
   where $0\ne e\in Ext^1(I_Z(2),\cO_{\PP^3})$ and $Z=L_1\cup L_2$ is the union of two disjoint lines $L_1$ and $L_2$. It holds: $H^0(\PP^3,\cE(-1))=0$ and $H^0(\PP^3,\cE)\ne 0$. Therefore, $\cE$ is initialized.
Let us consider its restriction $\cE_{H}$ to a general plane $H\subset \PP^3$. We have:
 $$
   e_H: \quad \quad  0 \longrightarrow \cO _{H}\longrightarrow \cE_H  \longrightarrow I_{Z\cap H}(2) \longrightarrow 0.$$
  Since $H^0(H,I_{Z\cap H}(1))=\kk$ (because there is a line $L$ through the two different points of $Z\cap H$), we get $H^0(H,\cE_{|H}(-1))=\kk \ne 0$ and, hence, $\cE_{|H}$ is not initialized.
So, the previous result highlights a very important property of Ulrich bundles.
\end{remark}

\begin{lemma}\label{keylemma}
     Let $X$ be a smooth irreducible projective variety  with a fixed ample
line bundle $\cO _X (1)$ and let $\cE$ be an  initialized Ulrich bundle of rank $r$ on $X$.
Let $Y\subset X$ a general hyperplane section. It holds $S_{\cE_{|Y}}\cong (S_{\cE})_{|Y}.$
\end{lemma}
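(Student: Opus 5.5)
We restrict the defining sequence \eqref{Eq:Syzygy bundle exact sequence} of $S_{\cE}$ to $Y$ and compare it with the defining sequence of $S_{\cE_{|Y}}$. Since all three terms of
$$0\longrightarrow S_{\cE}\longrightarrow H^0(X,\cE)\otimes\cO_X\longrightarrow \cE\longrightarrow 0$$
are vector bundles, tensoring with $\cO_Y$ keeps it exact, because $\mathcal{T}or_1(\cE,\cO_Y)=0$ for $\cE$ locally free. This gives
$$0\longrightarrow (S_{\cE})_{|Y}\longrightarrow H^0(X,\cE)\otimes\cO_Y\longrightarrow \cE_{|Y}\longrightarrow 0,$$
where the right-hand map is the composition of the restriction $\rho: H^0(X,\cE)\to H^0(Y,\cE_{|Y})$ with the evaluation map $ev_Y: H^0(Y,\cE_{|Y})\otimes\cO_Y\to\cE_{|Y}$.

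The key step is to show that $\rho$ is an isomorphism. We take the cohomology sequence of
$$0\longrightarrow \cE(-1)\longrightarrow\cE\longrightarrow\cE_{|Y}\longrightarrow 0,$$
as in the proof of the first Lemma \ref{keylemma}. Since $\cE$ is Ulrich, $H^0(X,\cE(-1))=0$ and $H^1(X,\cE(-1))=0$; this is the vanishing $H^i(X,\cE(-j))=0$ for $1\le j\le \dim X$. Hence
$$0\to H^0(X,\cE)\xrightarrow{\rho} H^0(Y,\cE_{|Y})\to H^1(X,\cE(-1))=0,$$
so $\rho$ is bijective. In particular $h^0(Y,\cE_{|Y})=h^0(X,\cE)$, consistent with the first Lemma \ref{keylemma}, which says $\cE_{|Y}$ is Ulrich. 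By Proposition \ref{properties}(i), $\cE_{|Y}$ is globally generated, so $S_{\cE_{|Y}}$ is defined by the exact sequence
$$0\to S_{\cE_{|Y}}\to H^0(Y,\cE_{|Y})\otimes\cO_Y\xrightarrow{ev_Y}\cE_{|Y}\to0.$$

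Finally, $\rho\otimes\mathrm{id}_{\cO_Y}$ is an isomorphism $H^0(X,\cE)\otimes\cO_Y\cong H^0(Y,\cE_{|Y})\otimes\cO_Y$. It is compatible with the two surjections onto $\cE_{|Y}$, since restricting a section and then evaluating at a point of $Y$ is the same as evaluating at that point. The two kernels are therefore identified, giving $(S_{\cE})_{|Y}\cong S_{\cE_{|Y}}$.

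The only substantive input is the vanishing of $H^0(X,\cE(-1))$ and $H^1(X,\cE(-1))$, which is where the Ulrich hypothesis is used. The rest is the five lemma, or simply the identification of kernels in a commutative diagram. When $\dim X=1$, the index range $1\le j\le\dim X$ still contains $j=1$, so the vanishing of $H^1(X,\cE(-1))$ holds there as well and the argument needs no change. Generality of $Y$ is used only so that $Y$ is smooth, which makes the statement meaningful in the paper's setting.
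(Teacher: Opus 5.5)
Your proof is correct and follows essentially the same route as the paper: restrict the defining sequence of $S_{\cE}$ to $Y$, use $0\to\cE(-1)\to\cE\to\cE_{|Y}\to 0$ to show that the restriction $H^0(X,\cE)\to H^0(Y,\cE_{|Y})$ is an isomorphism, and then identify the kernels in the resulting commutative diagram (the paper invokes the snake lemma for this last step). You also correctly cite $H^1(X,\cE(-1))=0$ as the vanishing that gives surjectivity of the restriction map; this is the one actually needed, whereas the paper's text mentions $H^1(X,\cE(-2))=0$ at that point.
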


\begin{proof} The proof relies on the following observation: Since  $\cE$ is a rank $r$ Ulrich bundle on $X$ we have $H^0(X,\cE(-1))=H^1(X,\cE(-2))=0$ and from the exact cohomology sequence associated to
 $$
    0 \longrightarrow \cE (-1)\longrightarrow \cE  \longrightarrow \cE _{|Y } \longrightarrow 0 $$
    we  get $H^0(X,\cE)\cong H^0(Y,\cE _{|Y})=\kk ^{r\cdot \deg(X)}$. We consider now the short exact sequence
    $$
    0 \longrightarrow S_{\cE}\longrightarrow H^0(X,\cE)\otimes \cO_X  \longrightarrow \cE  \longrightarrow 0 $$
    whose restriction to $Y$ gives rise to the following commutative diagram:

    \[
\xymatrix{0\ar[r] & (S_{\cE})_{|Y}  \ar[r] & H^0(X,\cE)\otimes \cO_Y \ar[r] \ar[d]^{\psi } & \cE _{|Y } \ar[d]^{\varphi} \ar[r] & 0 \\
0\ar[r] & S_{\cE_{|Y}}  \ar[r] & H^0(Y,\cE_{|Y})\otimes \cO_Y \ar[r] & \cE _{|Y } \ar[r] & 0
}
\]
Applying the snake lemma and taking into account that $\psi $ and $\varphi $ are isomorphisms, we deduce that $S_{\cE_{|Y}}\cong (S_{\cE})_{|Y}$ which proves what we want.
\end{proof}

\begin{remark} \rm
   Again the above lemma highlights  an important feature of Ulrich bundles since, in general, it is far from being true that the restriction $(S_{\cE})_{|H}$ to a general hyperplane section  $H$ of a syzygy bundle $S_{\cE}$ associated to a globally generated  vector bundle $\cE$ is the syzygy bundle $S_{\cE_{|H}}$ of the restriction to $H$ of  the vector bundle $\cE$. In fact, we consider the cotangent bundle $\cE=\Omega _{\PP^n}^1(1)$, i.e. the syzygy bundle associated to the line bundle $\cO_{\PP^n}(1)$. Its restriction to a general hyperplane $H\cong \PP^{n-1}\subset \PP^n$ is  $(\Omega^1_{\PP^{n}}(1))_{|\PP^{n-1}}\cong \Omega^1_{\PP^{n-1}}(1)\oplus \cO_{\PP^{n-1}}$. Therefore, $(S_{\cO_{\PP^n}(1)}) _{|\PP^{n-1}}\cong S_{\cO _{\PP^{n-1}}(1)}\oplus \cO_{\PP^{n-1}}$.
\end{remark}

\begin{theorem}\label{main}
 Let $X$ be a smooth  Fano variety of dimension $n$ and index $i_X\ge n-2$ or a smooth K3 surface. Let $\cE$ be a rank $r$  Ulrich bundle on $X$. Then, the syzygy bundle $S_{\cE}$ is semistable.
\end{theorem}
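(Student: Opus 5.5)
We argue by induction on $n=\dim X$, using the two lemmas to cut down by general hyperplane sections until we reach a curve or a surface where the theorem is known. By the Remark on hyperplane sections, a general $Y\in|\cO_X(1)|$ of a Fano $X$ of dimension $n$ and index $i_X\ge n-2$ (with $i_X>1$) is again Fano of dimension $n-1$ and index $i_Y=i_X-1\ge (n-1)-2$. For $n=3$, $i_X=1$, the section is a K3 surface. Iterating, we reach either a smooth curve (when the index exceeds the dimension by enough: a line or conic section of $\PP^n$ or a quadric), a del Pezzo surface, or a K3 surface. By Lemma \ref{keylemma} (first version), $\cE_{|Y}$ is an initialized Ulrich bundle on $Y$. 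By the second lemma, $(S_{\cE})_{|Y}\cong S_{\cE_{|Y}}$. Since semistability of the restriction to a general hyperplane section implies semistability of the bundle (a destabilizing subsheaf $\cF\subset S_\cE$ restricts to a subsheaf of $(S_\cE)_{|Y}$ of the same rank and slope, computed with $H_{|Y}$, because $c_1(\cF)\cdot H^{n-1}=c_1(\cF_{|Y})\cdot H_Y^{n-2}$), it suffices to prove the base cases.

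\textbf{Base cases.} On a curve $C$ (genus $0$ here, or genus $1$ from the plane cubic case), we invoke Butler \cite{B94}. The Ulrich bundle $\cE_{|C}$ is semistable by Proposition \ref{properties}(iii). Its slope is $\mu=\deg C + g-1$, because $h^0=r\deg C$ and $h^1=0$. Hence $\mu\ge 2g$ for $g\le 1$, which gives semistability of $S_{\cE_{|C}}$; on $\PP^1$ we can check this directly, since $\cE=\cO(d-1)^{r}$. For surfaces (del Pezzo or K3), we apply the surface results \cite[Theorem 1.1]{BP} or \cite[Theorem 1.3]{MRay}. Their hypotheses, namely semistability of $\cE$, global generation, and a vanishing such as $H^1(\cE)=0$ together with a bound on $\mu(\cE)$ or a regularity condition, are all supplied by Proposition \ref{properties}: Ulrich implies $0$-regular, globally generated and semistable. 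Alternatively, we could descend once more from the surface to a curve section, which is rational, elliptic (del Pezzo) or canonical of genus $g=\frac{H^2}{2}+1$ (K3). There the Ulrich slope on the curve is $\deg C+g-1$, which is $\ge 2g$ iff $\deg C\ge g+1$. This holds for the del Pezzo curve sections, but fails for canonical curves, where $\mu=3g-3$ versus $2g$: this is fine for $g\ge3$, while $g=2$ needs separate care.

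\textbf{Main obstacle.} The delicate point is the K3 case (and hence Fano threefolds of index $1$): checking that Butler's numerical condition $\mu(\cE_{|C})\ge 2g$ holds for the restriction to a general curve section. For this we compute $c_1(\cE)\cdot H$ for Ulrich bundles on a K3 via Riemann–Roch, which gives $c_1(\cE)H=\tfrac{3}{2}rH^2$, so $\mu(\cE_{|C})=\tfrac32 H^2=3g-3\ge 2g$ exactly when $g\ge3$. Since $H$ is very ample on a K3, we have $H^2\ge4$, so $g\ge3$ and this settles it. The analogous computation for del Pezzo surfaces and their sections must also be verified, and in the equality cases we only claim semistability, which is what the statement asserts.
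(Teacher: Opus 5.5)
Your proof is correct and is essentially the paper's argument. The two lemmas give $(S_{\cE})_{|C}\cong S_{\cE_{|C}}$ on a general curve section $C$, the Ulrich slope $\mu(\cE_{|C})=\deg C+g-1$ is at least $2g$, and Butler's theorem \cite{B94} plus restriction of a destabilizing subsheaf finishes the proof. The paper does the numerical check uniformly through the sectional genus formula rather than case by case. Your use of very ampleness to force $H^2\ge 4$ (hence $g\ge 3$) in the K3 and index $n-2$ cases spells out a point that the paper's one-line inequality leaves implicit.

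Discard the alternative appeal to \cite{BP} or \cite{MRay} for surfaces: you guess their hypotheses rather than verify them, and your descent to curves already handles the del Pezzo and K3 cases. Also correct the slip ``fails for canonical curves''; it should say the inequality holds for canonical curves exactly when $g\ge 3$.
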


    \begin{proof}
Let us first compute the sectional genus $g$ of $X$ with respect to $H\in |\cO_X(1)|$. Repeatedly applying the adjunction formula, we can write the sectional genus as
$$
g=1+\frac{n-1}{2}H^n-\frac{c_1(X)\cdot H^{n-1}}{2}
$$
where $g$ is the genus of the smooth irreducible curve $H^{n-1}$, $H^n$ its degree and $c_1(X)$ the first Chern class of $X$, i.e.,  the first Chern class of the tangent bundle $T_X$ of $X$ and hence $c_1(X)=-K_X$.

\noindent
{\em Claim: } The restriction $(S_{\cE})_{|H^{n-1}}$ of $S_{\cE}$ to an irreducible  smooth curve $H^{n-1}\subset X$ is semistable.

\noindent
{\em Proof of the Claim:} We first observe  that, by repeatedly applying Lemma \ref{keylemma}, we have:
$(S_{\cE})_{|H^{n-1}}\cong S_{\cE_{|H^{n-1}}}$. On the other side, the slope of any initialized Ulrich bundle on a smooth curve $H^{n-1}$ of degree $H^{n}$ and genus $g$ satisfies the equality \cite[Lemma 3.2.4]{CMP}:
$$
\mu (\cE_{|H^{n-1}})=H^n+g-1.$$
Therefore, we have
$$
\begin{array}{rcl}
\mu (\cE_{|H^{n-1}}) & = & H^n+g-1 \\
& = & H^n+\frac{n-1}{2}H^n-\frac{c_1(X)\cdot H^{n-1}}{2} \\
& \ge & 2g
\end{array}$$ where in the last inequality we use that $n=2$ and $c_1(X)=0$ or $n>2$ and $c_1(X)=-K_X=i_XH\ge (n-2)H.$ Since $\cE_{|H^{n-1}}$ is a semistable, globally generated vector bundle (Proposition \ref{properties} (i) and (iii) ) on a smooth curve $H^{n-1}$ of genus $g$  and $\mu (\cE_{|H^{n-1}})\ge 2g$;  we can apply \cite[Theorem 1.2]{B94} and conclude that the syzygy bundle $S_{\cE_{|H^{n-1}}}$ is semistable which proves the Claim.

Assume that $S_{\cE}$ is not semistable and let $\cF\subset S_{\cE}$ be a destabilizing subbundle of rank $0<\rk(\cF)<\rk(S_{\cE})$ such that $$\mu(\cF)=\frac{c_1(\cF)\cdot H^{n-1}}{\rk(\cF)}>\mu(S_{\cE})=\frac{c_1(S_{\cE})\cdot H^{n-1}}{\rk(S_{\cE})}.$$
Restricting to a general smooth curve $H^{n-1}$ we obtain $\mu (\cF_{|H^{n-1}})>\mu ((S_{\cE})_{|H^{n-1}})$ contradicting the semistability of $(S_{\cE})_{|H^{n-1}}$.
    \end{proof}

As an immediate application of the above theorem we get:

\begin{corollary}\label{Fano3fold}
    Let $X$ be a Fano 3-fold and  let $\cE$ be a rank $r$  Ulrich bundle on $X$. Then the syzygy bundle $S_{\cE}:=Ker(eval: H^0(X,\cE)\otimes \cO_X\longrightarrow \cE)$ is semistable.
\end{corollary}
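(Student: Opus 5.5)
The corollary is a direct specialization of Theorem \ref{main}, so the plan is to check that every Fano 3-fold meets its hypotheses. Here $n=3$, so the condition in Theorem \ref{main} reads $i_X\ge n-2=1$. The index $i_X$ of a Fano variety is defined as the largest integer $r$ with $-K_X=rH$ for some ample $H$. Since $-K_X$ is itself ample by Definition \ref{defFano}, we may take $H=-K_X$ and $r=1$, which gives $i_X\ge 1=n-2$ automatically.

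There is one point to address. Theorem \ref{main} and its proof work with the polarization $\cO_X(1)$ for which $\cE$ is Ulrich, and they use the relation $c_1(X)=i_XH$. When $i_X=1$ and $X$ is polarized by some other very ample $H$ with respect to which $\cE$ is Ulrich, I would instead check the inequality used in the Claim directly. That inequality is $\mu(\cE_{|H^{2}})=H^3+g-1\ge 2g$, which by the sectional genus formula is equivalent to $c_1(X)\cdot H^{2}\ge (n-2)H^3=H^3$.

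Two cases cover this. First, if $H$ is proportional to $-K_X$ (the index setting of the theorem), say $-K_X=i_XH$, then $c_1(X)\cdot H^2=i_XH^3\ge H^3$. Second, in general I would read the statement with the same conventions as Theorem \ref{main}, namely with $\cO_X(1)$ the fundamental divisor, so that $-K_X=i_XH$ with $i_X\ge1$.

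Once this is settled, the rest is a verbatim application of the theorem. Lemma \ref{keylemma} identifies $(S_\cE)_{|C}$ with $S_{\cE_{|C}}$ on a general curve section $C=H^2$. Butler's theorem then gives semistability on $C$, and a destabilizing subsheaf of $S_\cE$ would restrict to a destabilizing subsheaf on $C$, which is a contradiction. The only genuine obstacle is the bookkeeping about the polarization described above; everything else is immediate from Theorem \ref{main}.
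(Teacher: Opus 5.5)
Your core deduction is exactly the paper's: the corollary is stated as an immediate consequence of Theorem \ref{main}, because every Fano 3-fold has $i_X\ge 1=n-2$. That part of your proposal is fine.

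The polarization discussion, however, contains a wrong computation. From $2g-2=(K_X+2H)\cdot H^2$ one gets $c_1(X)\cdot H^2=2H^3+2-2g$. So Butler's hypothesis $H^3+g-1\ge 2g$ (i.e.\ $H^3\ge g+1$) is equivalent to $c_1(X)\cdot H^2\ge 4$, not to $c_1(X)\cdot H^2\ge H^3$. Your criterion is neither necessary nor sufficient:
\begin{itemize}
\item for $X=\PP^3$, $H=\cO(5)$ it fails ($100<125$), although $\mu=200\ge 152=2g$;
\item the values $c_1(X)\cdot H^2=H^3=2$ satisfy it, but give $g=2$ and $\mu=3<4=2g$.
\end{itemize}

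Consequently your Case 1 check $i_XH^3\ge H^3$ does not verify Butler's hypothesis. What is needed is $i_XH^3\ge 4$, and this uses very ampleness of $H$. For example, when $i_X=1$ the curve section is canonically embedded, hence $g\ge 3$ and $H^3=2g-2\ge 4$. The paper's justification ``$c_1(X)=i_XH\ge (n-2)H$'' also leaves this point implicit.

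With the correct inequality, your worry about other polarizations disappears for $n=3$. The number $c_1(X)\cdot H^2=2H^3+2-2g$ is even and positive, so it suffices to rule out the value $2$, i.e.\ $g=H^3$, for very ample $H$:
\begin{itemize}
\item Let $D=H_{|C}$ and let $F\in|(-K_X)_{|C}|$ be effective; such an $F$ exists since $h^0(-K_X)>0$ and $C$ is general.
\item Riemann--Roch with $K_C=2D-F$ gives $h^0(D)-h^0(D-F)=1$.
\item Very ampleness of $D$ forces $h^0(D-F)=h^0(D)-2$, a contradiction.
\end{itemize}
Hence Butler's criterion holds on every Fano 3-fold for every very ample polarization, and no convention about the fundamental divisor is needed.
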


\begin{remark} \rm Let $X$ be a smooth irreducible projective variety  with a fixed ample
line bundle $\cO _X (1)$. It is not always true that the syzygy bundle $S_{\cE}$ associated to an Ulrich bundle $\cE$ on $X$ is semistable. In fact, we know that the slope of any Ulrich bundle on a smooth
projective curve $C$ of degree $d$ and genus $g$ is $\mu (\cE)=d+g-1$ \cite[Lemma 3.2.4]{CMP}. On the other hand, by \cite[Theorem 1.2]{B94}, the syzygy bundle $S_{\cE}$ associated to a semistable globally generated vector bundle $\cE$ on $C$ is semistable if $\mu (\cE)\ge 2g$. Therefore, the syzygy bundle $S_{\cU}$ associated to a rank 2 Ulrich bundle $\cU$ on a smooth plane quintic curve $C$ is not semistable. However,  for any integer $m\ge 1$, the   syzygy bundle $S_{\cU(m)}$ associated to  $\cU(m)$ is stable.
\end{remark}

Based on our results and the above remark we  state the following guess:

\begin{conjecture}
    Let $X$ be a smooth irreducible projective variety  with a fixed ample
line bundle $\cO _X (1)$ and let $\cE$ be an Ulrich  bundle on $X$. There exists an integer $m¨_0\gg 0$ such that for all $m\ge m_0$ the syzygy bundle $S_{\cE(m)}$ associated to $\cE(m)$ is semistable.
\end{conjecture}

\vskip 4mm

 We have seen that the syzygy bundle $S_{\cE}$ of a rank $r$ Ulrich bundle on  a smooth Fano variety $X$ of dimension $n$ and index $i_X\ge n-2$  is semistable. Therefore it represents a point inside a suitable moduli space $\cM(\rk(S_{\cE});c_{i}(S_{\cE}))$ of semistable  vector bundles of fixed rank and Chern classes. We will end this short note  showing that they are parametrized by an open dense subset of a generically smooth irreducible component of $\cM(\rk(S_{\cE});c_{i}(S_{\cE}))$.

\vskip 2mm

To this end, we fix a smooth Fano variety $X$ of dimension $n\ge 3$ and index $i_X\ge n-2$ and a rank $r$ stable Ulrich $\cE$ bundle on $X$ with Chern classes $c_i:=c_i(\cE)$. We denote by
 $\Spl(r;c_i)$ the moduli space of simple
sheaves  $\cE$ (i.e. $\End (\cE)=\kk$)  on $X$ of rank $r$ and Chern classes $c_i$ and by $\cM(r; c_i)$ the open subset parameterizing semistable sheaves.
 We  consider $$U:=U(r;c_i)\subset   \Spl(r; c_i)$$ the open locus parameterizing  globally generated rank $r$ simple vector bundles $\cF$ on $X$ such that $H^1(X,\cF)=H^1(X,\cF^\lor)=H^2(X,\cF^\lor)=0$.
 Recall that  for any Ulrich bundle $\cE$ on $X$ we have $H^1(X,\cE)=H^1(X,\cE^\lor)=H^2(X, \cE^\lor)=0$. Therefore, the open subset $U(r;c_i)$ is non-empty and we will call $\cU l (r;c_i)$ the non-empty open dense (in suitable components) parameterizing Ulrich bundles; it will be also dense inside  suitable components of  $\Spl(r;c_i)$.

For any  $\cE\in \cU l(r;c_i)$, the associated  syzygy bundle $S_{\cE}$ is simple and semistable (Theorem \ref{main}) and, using stack theory, we can define  a morphism (see \cite[Definition 3.7]{FM} and \cite[Definition 3.12]{FM} for details)
 $$\alpha:\cU l(r;c_i) \longrightarrow \Spl(\rk(S_{\cE});c_i(S_{\cE}))$$ which extends the set-theoretic map $\cE\mapsto S_{\cE}$. It is known that:
\begin{itemize}
    \item[(i)]
 The morphism $\alpha$ is injective and a locally closed embedding \cite[Theorem 1.1 and Proposition 4.1]{FM1} and \cite[Proposition 3.13]{FM};
 \item[(ii)] $\alpha $ is an open embedding. Indeed, it follows from  \cite[Theorem  1.2]{FM1} taking into account that $dim X\ge 3$ and $H^2(X,\cO_X)=0$;
\item[(iii)] We have $$T_{[(\cE]}\cU l(r;c_i)=H^0(X,\cE^{\vee}\otimes \cE),$$
$$T_{[S_{\cE}]}\Spl (\rk(S_{\cE});c_i(S_{\cE}))=\Ext^1(S_{\cE},S_{\cE})=H^1(X,S_{\cE}^{\vee}\otimes S_{\cE})$$
and the differential map  $$d \alpha :T_{[\cE]}\cU l(r;c_i)\longrightarrow T_{[S_{\cE}]}\Spl (\rk(S_{\cE});c_i(S_{\cE}))$$ is injective (see \cite[Proposition 4.2]{FM1} and \cite[Proposition 3.13]{FM}).
 \end{itemize}

We will denote by $Syz_{\cU l} (\rk(S_{\cE});c_i(S_{\cE}))$ the image of the morphism $\alpha $ and we call it the {\em moduli space of the syzygy bundles}.
Putting altogether we have:

\begin{theorem} \label{Theorem:moduli} Let $X$ be a smooth Fano  variety of dimension $n\ge 3$ and index $i_X\ge n-2$. Let $\cU l(r;c_i)\subset \cM(r;c_i)\subset \Spl(r;c_i)$ be the moduli space of rank $r$ stable Ulrich bundles on $X$
with fixed Chern classes $c_i$. Assume $\cU l(r;c_i)$ non-empty and smooth. Then, $Syz_{\cU l} (\rk(S_{\cE});c_i(S_{\cE})) \subset \Spl(\rk(S_{\cE});c_i(S_{\cE}))$ is a smooth
open subspace and $dim _{\kk}\cU l(r;c_i)=dim _{\kk} Syz_{\cU l} (\rk(S_{\cE});c_i(S_{\cE})).$
\end{theorem}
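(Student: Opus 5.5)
The plan is to deduce Theorem \ref{Theorem:moduli} from the properties (i)--(iii) of the morphism $\alpha:\cU l(r;c_i)\to \Spl(\rk(S_{\cE});c_i(S_{\cE}))$ recalled just above. By Theorem \ref{main} each $S_{\cE}$ with $\cE\in\cU l(r;c_i)$ is semistable, and by the cited results of \cite{FM,FM1} it is simple. Hence $\alpha$ is well defined and its image $Syz_{\cU l}(\rk(S_{\cE});c_i(S_{\cE}))$ consists of simple semistable bundles. By (i), $\alpha$ is injective and a locally closed embedding. By (ii), which needs exactly $\dim X=n\ge 3$ and $H^2(X,\cO_X)=0$, it is an open embedding. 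So $\alpha$ induces an isomorphism of $\cU l(r;c_i)$ onto an open subspace of $\Spl(\rk(S_{\cE});c_i(S_{\cE}))$.

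The first hypothesis to check is $H^2(X,\cO_X)=0$ for a Fano $X$. This follows from Kodaira vanishing: $H^i(X,\cO_X)=H^i(X,K_X+(-K_X))=0$ for $i>0$, since $-K_X$ is ample. We also need that stable Ulrich bundles lie in the open locus $U(r;c_i)$, i.e.\ that they are simple, globally generated, and satisfy $H^1(X,\cE)=H^1(X,\cE^\vee)=H^2(X,\cE^\vee)=0$:
\begin{itemize}
\item simplicity holds because $\cE$ is stable;
\item global generation is Proposition \ref{properties}(i);
\item $H^1(X,\cE)=0$ holds by the aCM property;
\item $H^i(X,\cE^\vee)\cong H^{n-i}(X,\cE\otimes K_X)^\vee=H^{n-i}(X,\cE(-i_X))^\vee$ vanishes for $i=1,2$, because $n-i\in[1,n-1]$ and $\cE$ is aCM (here $n\ge 3$ is used).
\end{itemize}

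Smoothness then transfers through the isomorphism onto an open image. Since $\cU l(r;c_i)$ is assumed smooth, $Syz_{\cU l}$ is a smooth open subspace of $\Spl$, and in particular its points are smooth points of $\Spl$. The equality $\dim\cU l(r;c_i)=\dim Syz_{\cU l}(\rk(S_{\cE});c_i(S_{\cE}))$ is immediate from the isomorphism. As a consistency check I would use (iii): the differential $d\alpha$ is injective, so $\dim T_{[\cE]}\cU l\le \dim\Ext^1(S_{\cE},S_{\cE})$. Openness of the image, together with smoothness of the source, forces equality of local dimensions, and hence $d\alpha$ is an isomorphism on tangent spaces.

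The step I expect to be the main obstacle is the openness statement (ii), that is, verifying that the hypotheses of \cite[Theorem 1.2]{FM1} are really satisfied in this setting. Those hypotheses are the cohomological vanishings defining $U(r;c_i)$ together with $n\ge3$ and $H^2(\cO_X)=0$. If the cited theorem requires a further vanishing, for instance $H^1(X,\cE^\vee\otimes\cE)$-type conditions or $H^i(\cO_X)=0$ for more values of $i$, I would supply it from Kodaira vanishing and the Ulrich vanishings $H^i(X,\cE(-j))=0$ for $1\le j\le n$, using Serre duality as above. Everything else is formal.
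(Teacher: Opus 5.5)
Your proposal is correct and takes the same route as the paper. The paper's proof is simply the combination of properties (i)--(iii) of $\alpha$: injectivity as a locally closed embedding, openness via \cite[Theorem 1.2]{FM1} using $n\ge 3$ and $H^2(X,\cO_X)=0$, and injectivity of $d\alpha$. Together these make $Syz_{\cU l}$ an open subspace of $\Spl$ isomorphic to the smooth $\cU l(r;c_i)$. Your additional checks fill in what the paper only asserts: Kodaira vanishing for $H^2(X,\cO_X)=0$, and Serre duality plus the aCM property (with $K_X=\cO_X(-i_X)$, the paper's implicit convention) for $H^1(\cE)=H^1(\cE^\vee)=H^2(\cE^\vee)=0$.
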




\begin{thebibliography}{99}

\bibitem{BP} S. Basu and    S. Pal, {\em Stability of Syzygy bundles corresponding to stable vector bundles
on algebraic surfaces}, Bull. Sci. Math. {\bf 189} (2023), 103358.

\bibitem{Beau} A. Beauville, {\em An introduction to Ulrich bundles}, Eur. J. Math. {\bf 4} (2018),
26–36.



\bibitem{B94} D.C. Butler, {\em Normal generation of vector bundles over a curve}, J. Differential Geom.,
{\bf 39} (1994), 1-34.

\bibitem{CL} F. Caucci and M. Lahoz, {\em Stability of syzygy bundles on abelian varieties}, Bull. Lond. Math. Soc. {\bf 53:4}  (2021) 1030-1036.



\bibitem{CMP} L. Costa, R. M. Mir\'o-Roig, and J. F. Pons-Llopis, {\em Ulrich bundles: From
commutative algebra to algebraic geometry}, De gruyter Studies in Math-
ematics {\bf 77}, De Gruyter, Berlin (2021).

\bibitem{EL} L. Ein and R. Lazarsfeld, {\em  Stability and restrictions of Picard bundles, with an application to the normal bundles
of elliptic curves}. Complex projective geometry (Trieste, 1989/Bergen, 1989), 149–156, London Math. Soc. Lecture Note
Ser., {\bf 179} (1992), Cambridge Univ. Press, Cambridge.

\bibitem{ELM} L. Ein, R. Lazarsfeld and Y. Mustopa, {\em Stability of syzygy bundles on an algebraic surface}. Math. Res. Lett. {\bf 20:1} (2013), 73--80.



\bibitem{ES} D. Eisenbud and F.O. Schreyer, {\em Resultants and Chow forms via exterior syzygies}, J. Amer. Math. Soc. {\bf 16} (2003), 537–579.

\bibitem{FM} B. Fantechi and R. M. Mir\'o-Roig, {\em Lagrangian subspaces of the moduli space of simple sheaves on K3 surfaces},  Mediterr. J. Math. 22, 21 (2025). https://doi.org/10.1007/s00009-024-02791-1.

\bibitem{FM1} B. Fantechi and R. M. Mir\'o-Roig, {\em Moduli space of generalized syzygy bundles},Pure and Applied Mathematics Quarterly 20 (2024) 2113-2145.  https://dx.doi.org/10.4310/pamq.

\bibitem{Fle} H. Flenner, {\em  Restrictions of semistable bundles on projective varieties}, Comment. Math. Helv. {\bf 59} (1984),  635-650.


\bibitem{G} M. Green, {\em Koszul cohomology and the geometry of projective varieties}, J. Differential Geom. {\bf 19} (1984),  125-171.



\bibitem{MRay} S. Misra and N. Ray, {\em On the stability of syzygy bundles}. Preprint arXiv 2405.17006v1.

\bibitem{MR} J. Mukherjee and  D. Raychaudhury, {\em A note on stability of syzygy bundles on Enriques and bielliptic surcaes}, arXiv 2111.08231.

\bibitem{R} N. Rekuski, {\em Stability of kernel sheaves associated to rank one torsion free sheaves}. Math. Z. {\bf 307} (2024).

\bibitem{S}  I. R. Shafarevich. {\em Algebraic geometry V}, volume 47 of Encyclopaedia of Mathematical Sciences. Springer-Verlag, Berlin, 1999. Fano varieties, A translation of Algebraic geometry 5 (Russian), Ross. Akad. Nauk, Vseross. Inst. Nauchn. i Tekhn. Inform., Moscow, Translation
edited by A. N. Parshin and I. R. Shafarevich.


\bibitem{Ul} B. Ulrich, {\em Gorenstein Rings and Modules with High Numbers of Generators}, Math. Z. {\bf 188}
(1984), 23-32



\end{thebibliography}
\end{document}